\documentclass[12pt]{extarticle}
\usepackage[top=70pt,bottom=70pt,left=75pt,right=77pt]{geometry}

\usepackage{amssymb, amsmath, amsthm, setspace, bbm, prettyref, graphicx, float, subfigure, color, mathrsfs, mathtools,comment}

\usepackage{Lutsko_Style}
\DeclareMathSizes{10}{10}{7}{5}

\title{Exceptional eigenvalue density for thin groups}
\author{Christopher Lutsko}

\begin{document}

  \maketitle
  \begin{abstract}
    \noindent  We prove a limit multiplicity conjecture of Hee Oh for principal congruence covers of geometrically finite hyperbolic manifolds, with an explicit power-saving rate. The first estimate follows by combining the pre-trace argument from Oh's work with injectivity on a fixed compact core. We formulate the argument as a shadow-return principle, giving a geometric interpretation to the exceptional eigenvalue density. For convex-cocompact groups, a packing argument for enlarged shadows gives a stronger rate.
  \end{abstract}

  \section{Introduction}

  Let $G:= \SO(n,1)^o$, with maximal compact subgroup $K<G$. Let $\rho=\frac{n-1}{2}$, and let $\Gamma < G(\Z)$ be a torsion-free, geometrically finite, Zariski dense subgroup. Write $X = \Gamma \bk \bH^n$ and let $\delta_{\Gamma}$ denote the critical exponent for $\Gamma$; we assume $\delta_{\Gamma}>\rho$. Elstrodt, Patterson, and Sullivan \cite{E1,E2,P,S} showed that $\delta_{\Gamma}$ is equal to the Hausdorff dimension of the limit set and moreover, satisfies
  \begin{align*}
    \lambda_0(\Gamma) = \delta_\Gamma(n-1-\delta_\Gamma)
  \end{align*}
  where $\lambda_0(\Gamma)$ is the bottom of the $L^2(X)$-spectrum of the Laplacian (equal to $0$ for lattices and greater than or equal to $0$ for non-lattice subgroups). If $X$ is noncompact, the essential spectrum of the Laplacian starts at $\rho^2$; if $X$ is compact, its spectrum is discrete. A major topic of research is the presence of possible \emph{exceptional eigenvalues} below $\rho^2$ (see for example Selberg's eigenvalue conjecture \cite{IK}).

  A natural question\footnote{We thank Dennis Sullivan for posing this question to the author} is whether the exceptional eigenvalues have any geometric interpretation. To which the answer is yes. In particular, we show that the density is controlled by the return of Patterson-Sullivan shadows through a compact core. For convex-cocompact groups the Patterson-Sullivan measure is comparable to the Hausdorff measure. Hence, this allows us to answer Sullivan's question.

  For $s \in (\rho,n-1]$, put
  $
      \lambda_s = s(n-1-s).
  $
      If $\Gamma'<\Gamma$ has finite index, let $\cN(\lambda_s,\Gamma')$ denote the number of Laplace eigenvalues of $L^2(\Gamma'\bk \bH^n)$ in $[0,\lambda_s]$, counted with multiplicity. For $q \ge 1$, set
      \begin{align*}
        \Gamma(q) : = \{\gamma\in\Gamma \ : \ \gamma \equiv e \mod{q}\}, \qquad D_q : = [\Gamma: \Gamma(q)].
      \end{align*}
      In \cite[Conjecture 1.11]{H}, Hee Oh conjectured that the normalized exceptional eigenvalue count tends to zero along the principal congruence tower. We prove this conjecture with polynomial decay. 

      \begin{theorem}[Oh's Limit Multiplicity Conjecture] \label{thm:main}
        For every $s \in (\rho, \delta_\Gamma]$ and every $q\ge 3$,
          \begin{align}\label{eq:main}
            \frac{\cN(\lambda_s,\Gamma(q))}{D_q} \ll_s q^{-(s-\rho)}.
          \end{align}
        If $\Gamma$ is convex cocompact, then, for every $\epsilon>0$,
          \begin{align}\label{eq:main-cc}
            \frac{\cN(\lambda_s,\Gamma(q))}{D_q}
            \ll_{s,\epsilon}
            q^{-\frac{\delta_\Gamma(s-\rho)}{2(\delta_\Gamma-\rho)}+\epsilon}.
          \end{align}
      \end{theorem}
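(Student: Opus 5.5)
We plan to follow the pre-trace strategy of Oh, made effective by two geometric inputs: injectivity of the congruence covering on a fixed compact core, and control of the Patterson--Sullivan mass that returns to that core. Fix a compact set $\Omega\subset X$ containing the convex core minus cusp neighbourhoods; in the convex-cocompact case $\Omega$ is a neighbourhood of the whole convex core. Let $\phi_1,\dots,\phi_N$, $N=\cN(\lambda_s,\Gamma(q))$, be orthonormal eigenfunctions on $X_q=\Gamma(q)\bk\bH^n$ with eigenvalues $\lambda_{s_j}\le\lambda_s$, so $s_j\ge s$.

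\textbf{Step 1 (pre-trace at time $T$).} Take a $K$-bi-invariant kernel $k_T$ approximating the normalized indicator of the ball of radius $T$, or the wave propagator at time $T$. Its spherical transform at $s_j$ is $\gg e^{(s_j-\rho)T}\ge e^{(s-\rho)T}$, while on the tempered and continuous spectrum it is $O(T)$. Positivity of the pre-trace identity $\sum_j |\hat k_T(s_j)|^2|\phi_j(x)|^2\le \sum_{\gamma\in\Gamma(q)} (k_T*k_T)(x,\gamma x)$ then bounds $e^{2(s-\rho)T}\sum_j|\phi_j(x)|^2$ by an orbit count. Integrate over the lift $\Omega_q$ of $\Omega$ to $X_q$, which has volume $\asymp D_q$. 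This requires the standard fact, essentially Lax--Phillips or the Patterson--Sullivan theory, that an exceptional eigenfunction of norm $1$ carries a fixed proportion $c(s)>0$ of its mass on $\Omega_q$. That proportion is uniform in $q$, because $L^2$ eigenfunctions decay in the funnels and cusps at rate governed by $s_j>\rho$. The result is
\[
N\,e^{2(s-\rho)T}\ll \sum_{x\in\Omega}\#\{\gamma\in\Gamma(q):d(x,\gamma x)\le 2T\}e^{-2\rho T}\cdot(\text{normalization}).
\]

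\textbf{Step 2 (injectivity and shadow return).} The identity contributes $D_q$ after unfolding. For $\gamma\in\Gamma(q)\setminus\{e\}$, integrality gives $\|\gamma\|\gg q$, hence $d(o,\gamma o)\ge 2\log q-O(1)$. Therefore, for $T\le \log q - C$, the covering is injective on $T$-balls around $\Omega$ and only the identity term survives. To handle larger $T$ we bound the nontrivial terms by lattice point counting in $\Gamma$ restricted to $\Gamma(q)$: the count of orbit points in a ball of radius $R$ is $\ll e^{\delta_\Gamma R}$. Dividing by $D_q$, the nonidentity contribution is the Patterson--Sullivan mass of shadows from $\Omega$ that return to $\Omega$ after time $\ge 2\log q$. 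Choosing $T=\log q$ balances the terms and yields $N/D_q\ll e^{-2(s-\rho)T}\cdot e^{(s-\rho)\cdot(\ldots)}$. Done carefully with the $O(T)$ tempered contribution, this gives $q^{-(s-\rho)}$. We expect the correct bookkeeping to be taking $k_T$ with $\hat k_T(s)\asymp e^{(s-\rho)T}$, counting the $L^2$ norm via $\int_{\Omega_q}$, and choosing $T=\log q$, which gives exactly \eqref{eq:main}.

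\textbf{Step 3 (convex cocompact improvement).} Here we push $T$ beyond the injectivity radius. The nonidentity terms are estimated by a packing argument: distinct elements of $\Gamma(q)$ give enlarged shadows on the limit set of radius $e^{-T}$. These shadows are separated at scale $q^{-1}$ in the sense that two of them can only overlap if the difference lies in $\Gamma(q)$. Consequently the number of such $\gamma$ with $d(o,\gamma o)\le 2T$ is at most $D_q^{-1}e^{2\delta_\Gamma T}$ times the Patterson--Sullivan measure ratio. Since $\mu_{PS}$ is Ahlfors $\delta_\Gamma$-regular in the convex cocompact case, the packing count is $\ll e^{2\delta_\Gamma T} q^{-\delta_\Gamma}\cdot q^{\epsilon}$ beyond the identity. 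Balancing $D_q$ against $e^{2(\delta_\Gamma-\rho)T}D_q q^{-\delta_\Gamma}$ (after the $e^{-2\rho T}$ normalization) gives $e^{2(\delta_\Gamma-\rho)T}=q^{\delta_\Gamma}$. Hence $N/D_q\ll e^{-2(s-\rho)T}=q^{-\delta_\Gamma(s-\rho)/(\delta_\Gamma-\rho)}$; a square-root loss from Cauchy--Schwarz in the pre-trace step (one $k_T$ instead of $k_T*k_T$) brings this to the stated exponent \eqref{eq:main-cc}.

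The main obstacle is Step 3's packing estimate: showing that nontrivial elements of $\Gamma(q)$ returning to the core are equidistributed enough that their shadows pack with only $q^{-\delta_\Gamma+\epsilon}$ loss. We would first try to derive it from the expansion and Ahlfors regularity input, comparing Hausdorff and Patterson--Sullivan measure, and fall back on the trivial injectivity bound of Step 2 where that fails.
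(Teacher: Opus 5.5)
\textbf{Part \eqref{eq:main}.} Your route here is essentially the paper's. You use a positive kernel $k_T*k_T$; your $k_T=e^{-\rho T}\mathbbm{1}_{B_T}$ works as well as the paper's $\chi_T\psi_s$, since $0\le(k_T*k_T)(g)\ll e^{-\rho d(o,go)}$ on $d(o,go)\le 2T$ and $\widehat{k_T*k_T}\gg e^{2(s-\rho)T}$ for parameters in $[s,\delta_\Gamma]$. You then use the collar property, integrate over $\Omega_q$, and use injectivity to remove the nonidentity terms. The uniformity in $q$ of the collar property is the Gamburd--Magee collar lemma, and you should cite it rather than appeal to decay of individual eigenfunctions.

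Step 2, however, is muddled. In the defining representation of $\SO(n,1)$ one has $\|\gamma\|\asymp e^{d(o,\gamma o)}$, so $\|\gamma\|\gg q$ gives only $d(o,\gamma o)\ge\log q-O(1)$, which is Lemma~\ref{lem:injectivity}, not $2\log q-O(1)$. The $2\log q$ separation is in fact true, by a finer congruence: $Q(\gamma w-w)\in q^2\Z$ for an integral $w$ with $Q(w)<0$, where $Q$ is the invariant integral form. But it does not follow from the reason you give. With the $\log q$ scale the bookkeeping is simple: take $2T=\log q-B-1$. Then only the identity term survives and $\cN(\lambda_s,\Gamma(q))/D_q\ll e^{-2(s-\rho)T}\ll q^{-(s-\rho)}$. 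No balancing, orbit count, or tempered contribution enters, because positivity lets you discard all spectrum outside $[s,\delta_\Gamma]$.

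\textbf{Part \eqref{eq:main-cc}: the genuine gap is Step 3, which is the whole content of this estimate.} You do not prove the packing bound; you flag it as the main obstacle. The sketch also points the wrong way, for three reasons.
\begin{itemize}
\item A count $D_q^{-1}e^{2\delta_\Gamma T}$ is an equidistribution statement of the type in Oh's uniform orbital counting. That needs a uniform spectral gap, which is exactly what the paper avoids.
\item The statement that two shadows overlap only if the difference lies in $\Gamma(q)$ is vacuous, since both elements already lie in $\Gamma(q)$.
\item The stated exponent is reached only through a ``square-root loss from Cauchy--Schwarz'' that occurs nowhere in the argument. The kernel is already $k_T*k_T$, with nonnegative transform.
\end{itemize}

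The missing idea is elementary. By normality and Lemma~\ref{lem:injectivity}, $d(\gamma_1z,\gamma_2z)=d(z,\gamma_1^{-1}\gamma_2z)\ge\ell_q=\log q-B$, so the $\Gamma(q)$-orbit of $z$ is $\ell_q$-separated. Restrict to a unit shell $R\le d(z,\gamma z)<R+1$ and fix $\vartheta<1/2$. The enlarged shadows $\operatorname{Sh}_z(B_{\gamma z}(\vartheta\ell_q))$ then have bounded overlap: two overlapping ones force a Gromov product $\ge R-\vartheta\ell_q-O(1)$, hence centres within $2\vartheta\ell_q+O(1)<\ell_q$. By Ahlfors regularity each shadow has $\nu_z$-mass $\asymp e^{-\delta_\Gamma(R-\vartheta\ell_q)}$, so the shell contains $\ll e^{\delta_\Gamma(R-\vartheta\ell_q)}$ orbit points.

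This gives $\cE_q(L)\ll1+e^{-\vartheta\delta_\Gamma\ell_q}e^{(\delta_\Gamma-\rho)L}$. Take $L=\vartheta\delta_\Gamma\ell_q/(\delta_\Gamma-\rho)$; this satisfies $L\ge\ell_q$ once $\vartheta>(\delta_\Gamma-\rho)/\delta_\Gamma$, which is compatible with $\vartheta<1/2$ when $\delta_\Gamma<2\rho$. Letting $\vartheta\to1/2$ yields \eqref{eq:main-cc}. The factor $\tfrac12$ in the exponent comes from the constraint $\vartheta<\tfrac12$, not from Cauchy--Schwarz. Fed the sharper $2\log q$ separation, the same packing would in fact produce your claimed count $e^{2\delta_\Gamma T}q^{-\delta_\Gamma+\epsilon}$, but your sketch establishes neither ingredient.
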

      The first estimate in Theorem~\ref{thm:main} is a short consequence of the pre-trace argument in \cite[Section~2.3]{H}. Indeed, the positive kernel, the collar lemma, the spectral lower bound, and the unfolding over the compact collar are already present there. The new input for \eqref{eq:main} is the estimate in Lemma~\ref{lem:injectivity}: stopping Oh's kernel before the first nonidentity return leaves only the identity term on the geometric side. In particular, this deduction does not require the uniform spectral gap or the global orbital-counting theorem used in \cite{H}. The shadow-return formulation below abstracts this observation, while the second estimate \eqref{eq:main-cc} uses the additional packing argument of Section~\ref{sec:congruence}.

      Note that, for all but finitely many primes, strong approximation gives $D_q \asymp q^g$ where $g = \dim (G)$ \cite{MVW}. Hence, Theorem 1 gives
      \begin{align}\label{eq:degree}
        \cN(\lambda_s,\Gamma(q)) \ll_s D_q^{1-(s-\rho)/g}.
      \end{align}
      In the convex-cocompact case, \eqref{eq:main-cc} similarly gives
      \begin{align}\label{eq:degree-cc}
        \cN(\lambda_s,\Gamma(q))
        \ll_{s,\epsilon}
        D_q^{1-\frac{\delta_\Gamma(s-\rho)}{2g(\delta_\Gamma-\rho)}+\epsilon}.
      \end{align}
      This improves \eqref{eq:degree} whenever $\delta_\Gamma<2\rho$.

      \subsection{Geometry and spectral density}

      We now state the geometric result behind Theorem \ref{thm:main}. Let
      \begin{align*}
        p_i : X_i \to X
      \end{align*}
      be finite covers of degree $D_i$. Fix $s \in (\rho,\delta_\Gamma]$. We say that $(X_i)$ has the \emph{uniform collar property at $s$} if there exists a compact subset $\Omega=\Omega_s \subset X$ and a constant $c_s>0$ such that every $L^2$-normalized eigenfunction $\phi$ on $X_i$ with spectral parameter $u\in[s,\delta_\Gamma]$ satisfies
        \begin{align}\label{CP}
          \int_{p_i^{-1}(\Omega)} |\phi|^2 \rd x \ge c_s.
        \end{align}
        This prevents exceptional eigenfunctions from escaping into the ends. By the collar lemma of Gamburd and Magee \cite{G,M}, principal congruence covers have the uniform collar property at every $s\in(\rho,\delta_\Gamma]$.

        Put $\Omega_i=p_i^{-1}(\Omega)$. For $x =\Gamma_i g K\in \Omega_i$, set $r_g(\gamma) = d(go,\gamma go)$ and define the normalized return energy to be
        \begin{align}\label{eq:energy}
          \cE_i(L) := \frac{1}{D_i} \int_{\Omega_i} \sum_{\substack{\gamma \in \Gamma_i \\ r_g(\gamma) \le L }} e^{-\rho r_g(\gamma)} \rd x,
         \end{align}
        where the sum is independent of the representative $g$.

        \begin{theorem}[Shadow-return transfer]\label{thm:transfer}
          Suppose $(X_i)$ has the uniform collar property at $s$. Then, for every $L \ge 2$,
          \begin{align}\label{eq:transfer}
            \frac{\cN(\lambda_s,\Gamma_i)}{D_i} \ll_s e^{-(s-\rho)L} \cE_i(L).
          \end{align}
          The implied constant is independent of $i$ and $L$.
        \end{theorem}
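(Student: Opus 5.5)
The plan is a pre-trace argument with a positive, bi-$K$-invariant kernel supported on the ball of radius $L$, in the spirit of \cite[Section~2.3]{H}. Choose a point-pair invariant $k_L(g_1,g_2)=k_L(d(g_1o,g_2o))$, where $k_L\ge 0$ is supported in $[0,L]$. We build it as a convolution $k_L=h*h$ with $h$ the indicator of the ball of radius $L/2$, suitably normalized. With this choice the spherical transform $\hat k_L(u)=|\hat h(u)|^2$ is nonnegative for every tempered and every exceptional parameter. The operator $K_L$ on $L^2(X_i)$ given by the periodized kernel $K_i(x,y)=\sum_{\gamma\in\Gamma_i}k_L(g,\gamma g')$ is therefore positive, and each eigenfunction $\phi$ with parameter $u$ is an eigenfunction of $K_L$ with eigenvalue $\hat k_L(u)$.

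The first key step is a lower bound for the spherical transform at real parameters. Using the Harish-Chandra asymptotics $\varphi_u(r)\asymp e^{(u-(n-1))r}$ for real $u\in(\rho,n-1]$ (note the sign convention: $\varphi_u$ decays like $e^{-(n-1-u)r}$), we expect $\hat h(u)\gg e^{(u-\rho)L/2}\cdot(\text{normalization})$ uniformly for $u\in[s,\delta_\Gamma]$. We choose the normalization of $h$ so that $k_L(r)\ll e^{-\rho r}$ on its support, which is the standard size of $h*h$ for balls of radius $L/2$. With that normalization, $\hat k_L(u)\gg_s e^{(s-\rho)L}$ for all $u\in[s,\delta_\Gamma]$, since $u\ge s$.

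The second step is to localize to the compact core. Let $\{\phi_j\}$ be an orthonormal family of eigenfunctions with parameters in $[s,\delta_\Gamma]$; their number is $\cN(\lambda_s,\Gamma_i)$. Apply Bessel's inequality to $y\mapsto K_i(x,y)$ for fixed $x$:
\begin{align*}
\sum_j \hat k_L(u_j)^2|\phi_j(x)|^2\le \int_{X_i}|K_i(x,y)|^2\,\rd y = (K_L^2)_i(x,x),
\end{align*}
where $K_L^2$ has kernel $k_L*k_L$. To avoid squaring, it is cleaner to take $k_L=h*h$ directly and apply Bessel to the kernel of $h$. This gives
\begin{align*}
\sum_j \hat k_L(u_j)|\phi_j(x)|^2\le \sum_{\gamma\in\Gamma_i}k_L(g,\gamma g).
\end{align*}
Now integrate over $x\in\Omega_i$ and invoke the uniform collar property \eqref{CP}, which gives $\int_{\Omega_i}|\phi_j|^2\ge c_s$. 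This yields
\begin{align*}
c_s\, e^{(s-\rho)L}\,\cN(\lambda_s,\Gamma_i)\ll \int_{\Omega_i}\sum_{\gamma\in\Gamma_i,\ r_g(\gamma)\le L} e^{-\rho r_g(\gamma)}\,\rd x = D_i\,\cE_i(L).
\end{align*}
Dividing by $D_i$ gives \eqref{eq:transfer}, with constants that depend only on $s$, $n$ and $\Omega$, and hence not on $i$ or $L$.

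The main obstacle is the pointwise kernel bound $h*h(r)\ll e^{-\rho r}$ on $[0,L]$ holding simultaneously with the lower bound $\hat k_L(u)\gg e^{(u-\rho)L}$, with constants uniform in $L\ge2$. Normalizing $h=\mathbf 1_{B(L/2)}\,e^{-\rho L/2}/\mathrm{vol}(\ldots)$ can lose powers of $L$. If polynomial factors appear, I would first try replacing the ball indicator by the weighted profile $h(r)=e^{-\rho r}\mathbf 1_{r\le L/2}$, or by a thin-shell average at radius near $L/2$. For a shell, $h*h(r)$ is controlled by the volume of intersections of spheres, which is $\ll e^{-\rho r}$ up to constants, and $\hat h(u)=\varphi_u(L/2)\asymp e^{(u-(n-1))L/2}$. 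This produces exactly $e^{(u-\rho)L}$ after the $e^{\rho L}$ shell normalization. Positivity on the tempered spectrum is not needed here, because we only sum over exceptional $\phi_j$ and Bessel's inequality handles the rest.
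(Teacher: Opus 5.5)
Your proposal is correct and follows essentially the paper's argument. The paper also uses a positive-type kernel $h*\check h$ supported in the $L$-ball, normalized so that it is $\ll e^{-\rho r}$ while its transform is $\gg_s e^{(s-\rho)L}$ on $[s,\delta_\Gamma]$, combined with a pre-trace inequality localized to $\Omega_i$ and the collar property. The only differences are that the paper takes $h=\chi_T\psi_s$ and localizes via $\operatorname{Tr}(M_iA_{i,T}M_i)$ and positivity, whereas your pointwise Bessel inequality is the same estimate before integrating over $\Omega_i$.

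Your main line already closes without loss, so the worry about powers of $L$ is unfounded. For $h=e^{-\rho L/2}\mathbbm{1}_{B(L/2)}$, the lens $B_o(L/2)\cap B_x(L/2)$ with $d(o,x)=r$ has volume $\ll e^{\rho(L-r)}$, which gives $k_L(r)\ll e^{-\rho r}$, and $\hat k_L(u)\gg_s e^{(u-\rho)L}$. It is instead your first fallback, $h=e^{-\rho r}\mathbbm{1}_{r\le L/2}$, that would lose a factor of $L$, because there $k_L(0)=\|h\|_2^2\asymp L$.
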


        \begin{remark}
          We call this a shadow return lemma for the following reason. Given $z,w\in \bH^n$ and $R>0$, the \emph{shadow} of $B_w(R)$ viewed from $z$ is defined to be
          \begin{align*}
            \operatorname{Sh}_z(B_w(R)) : = \{\xi \in \partial \bH^n \ : \ [z, \xi) \cap B_w(R) \neq \varnothing\},
          \end{align*}
          where $[z,\xi)$ denotes the geodesic ray connecting $z$ to $\xi$. Thus, the shadow consists of all boundary points whose geodesic ray from $z$ intersects $B_w(R)$. 
            Let $(\nu_x)$ denote the Patterson-Sullivan density (see below) and fix $R_0$ sufficiently large. Then the shadow lemma \cite{S} gives, uniformly over lifted compact cores,
        \begin{align}\label{eq:shadow}
          e^{-\rho r_g(\gamma)} \asymp \nu_{g o}(\operatorname{Sh}_{go}(B_{\gamma g o}(R_0)))^{\rho/\delta_{\Gamma}}.
        \end{align}
        Hence, $\cE_i(L)$ is an averaged return energy for the Patterson-Sullivan shadows.
        \end{remark}

      \subsection{Previous work}

      \cite[Conjecture 1.11]{H} is known to be true if $\Gamma$ is a cocompact lattice by DeGeorge and Wallach \cite{DW}, or if $\Gamma=\SL_2(\Z)$ by Sarnak \cite{Sa}. It does not seem to be known for a general arithmetic lattice, although Savin proved it for those $s$ whose corresponding eigenfunctions are cusp forms \cite{Sav}. We also refer to \cite{FLM}, where an analogous problem was answered positively for principal congruence subgroups of $\GL_n$ and $\SL_n$.

      The quantitative density theorems of Sarnak--Xue and Oh give the following comparison.

      \begin{theorem}[Sarnak--Xue \cite{SX}; Oh \cite{H}]\label{thm:SX-Oh}
        \leavevmode
        \begin{enumerate}
          \item If $\Gamma$ is a cocompact arithmetic subgroup of $\SO(n,1)^o$, with $n=2$ or $3$, then, for every fixed $s\in(\rho,n-1]$ and every $\epsilon>0$,
          \begin{align}\label{eq:SX-density}
            \cN(\lambda_s,\Gamma(q))
            \ll_{s,\epsilon}D_q^{((n-1)-s)/\rho+\epsilon}.
          \end{align}
          \item Under the hypotheses of this paper, there exists $\eta>0$ such that, for every fixed $s\in(\rho,\delta_\Gamma]$ and every prime $q$,
          \begin{align}\label{eq:Oh-density}
            \cN(\lambda_s,\Gamma(q))
            \ll_s D_q^{(\delta_\Gamma-s)/\eta}.
          \end{align}
          Here $\eta$ may be any positive number smaller than the uniform spherical spectral gap of the family.
        \end{enumerate}
      \end{theorem}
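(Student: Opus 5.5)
Both parts of Theorem~\ref{thm:SX-Oh} would follow from one template: a pre-trace inequality with a positive kernel, whose spectral side is bounded below on exceptional eigenfunctions and whose geometric side is bounded above by a lattice-point count in $\Gamma(q)$. I would fix a radius $L$ and take $k_L$ to be a nonnegative bi-$K$-invariant kernel, for instance the convolution square of the normalized indicator of a ball of radius $L/2$, so that the spherical transform $\hat k_L$ is nonnegative on the whole spectrum. By the standard estimate for spherical functions, $\hat k_L(u)\gg e^{(u-\rho)L}$ for real spectral parameter $u\in[s,\delta_\Gamma]$, and $k_L(g)\ll e^{-\rho d(o,go)}$ on its support. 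I would apply the pre-trace formula to $\Gamma(q)$, drop all nonnegative contributions except the exceptional eigenvalues, and integrate over a fundamental domain. This gives
\[
\cN(\lambda_s,\Gamma(q))\,e^{(s-\rho)L}\ll \int_{F}\sum_{\gamma\in\Gamma(q)}k_L(g^{-1}\gamma g)\,\rd g .
\]
In the noncompact case the integral would be restricted to $p^{-1}(\Omega)$, and the collar lemma \eqref{CP} keeps the left-hand side intact, exactly as in Theorem~\ref{thm:transfer}. On the right, the identity term contributes $\asymp D_q$ times a constant, since $k_L(e)\ll e^{-2\rho L}\cdot e^{2\rho L}$ after normalization. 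The remaining terms are $D_q^{-1}$-normalized sums over nonidentity $\gamma\in\Gamma(q)$ with $d(go,\gamma go)\le L$, weighted by $e^{-\rho r_g(\gamma)}$.

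For part (1), following Sarnak--Xue, I would use the arithmetic of $\Gamma$. Entries of $\gamma\in\Gamma(q)$ satisfy $\gamma\equiv e \pmod q$, and $\|\gamma\|\asymp e^{r}$. The key counting lemma bounds $\#\{\gamma\in\Gamma(q):\|\gamma\|\le T\}$ by roughly $T^{\epsilon}\big(T^{2\rho'}/q^{g}+T^{a}/q^{b}+1\big)$ for explicit exponents. It is proved by writing $\gamma=e+qA$ and counting integer matrices $A$ with the quadratic constraints ($\det=1$ and preservation of the form), separately for $n=2$ and $n=3$. Inserting this count with weights $e^{-\rho r}$ and choosing $e^{L}\asymp q^{c}$ so that the nonidentity contribution is at most the identity contribution gives
\[
\cN\ll D_q\,e^{-(s-\rho)L}=D_q^{((n-1)-s)/\rho+\epsilon}.
\]
The matching of exponents uses $D_q\asymp q^{g}$. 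I expect the main obstacle here to be the lattice-point count: one needs the saving $q^{-g}$ together with a sharp enough secondary term, and this is why the result is restricted to $n=2,3$.

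For part (2), following Oh, I would replace the arithmetic count by the uniform effective orbital counting theorem for the family $\Gamma(q)$, $q$ prime. Uniform spherical spectral gap $\eta_0>\eta$ gives
\[
\#\{\gamma\in\Gamma(q): d(go,\gamma go)\le r\}=\frac{c\,e^{\delta_\Gamma r}}{D_q}+O\!\left(e^{(\delta_\Gamma-\eta)r}\right),
\]
uniformly for $go$ in the lifted core. Summing with weights $e^{-\rho r}$ bounds the geometric side by $D_q\big(1+e^{(\delta_\Gamma-\rho)L}/D_q+e^{(\delta_\Gamma-\rho-\eta)L}\big)$. The spectral side is at least $\cN\,e^{(s-\rho)L}$. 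I would choose $e^{\eta L}\asymp D_q$, so that the main term of the count is dominated by the error term. This yields
\[
\cN\ll e^{(\delta_\Gamma-s)L}=D_q^{(\delta_\Gamma-s)/\eta}.
\]
The delicate point is the uniformity of the counting error in both $q$ and the basepoint, which is exactly where the uniform spectral gap is used.
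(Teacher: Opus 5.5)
Your proposal is correct and is essentially the paper's argument: the positive-kernel/collar pre-trace inequality (Theorem~\ref{thm:transfer}) reduces both parts to bounding, via normality of $\Gamma(q)$, the weighted return sum over a fixed compact set, and then the cited Sarnak--Xue and Oh counting estimates, partial summation, and the choices $e^{\rho L}\asymp D_q$ (part 1) and $e^{\eta L}\asymp D_q$ (part 2) finish exactly as in the paper. Your ball-indicator kernel differs only cosmetically from the paper's $\chi_T\psi_s$: normalize the indicator by the inverse square root of the ball's volume, which is what gives $\widehat{k}_L(\lambda_u)\gg e^{(u-\rho)L}$ and $k_L(g)\ll e^{-\rho d(o,go)}$, hence the same ratio $e^{(s-\rho)L}$. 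Also, in part (2) the identity term is absorbed because $\eta<\delta_\Gamma-\rho$ holds automatically, since the spectral gap is at most $\delta_\Gamma-\rho$.
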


      The estimate \eqref{eq:Oh-density} is stronger than \eqref{eq:main} when $s$ is sufficiently close to $\delta_\Gamma$, but it gives limit multiplicity only when $\delta_\Gamma-s<\eta$. The estimate \eqref{eq:main} covers the full exceptional interval without a uniform spectral gap. Its proof uses the same positive kernel, collar lemma, pre-trace inequality, and unfolding as \cite[Section~2.3]{H}; Lemma~\ref{lem:injectivity} permits the kernel to be stopped before the global orbital-counting estimate is needed. The common pre-trace deduction of the earlier comparison estimates is recalled in Section~\ref{sec:transfer}.

      The classical limit-multiplicity theorem of DeGeorge and Wallach normalizes by total volume \cite{DW}, while later work treats Benjamini--Schramm convergence \cite{ABB}. In the infinite-volume setting, the collar property prevents escape of mass and the degree of one fixed compact core replaces total volume. The return condition is closely related to the weak injective-radius property of Golubev--Kamber \cite{GK}. The additional geometric point developed here is that the same pre-trace argument can be organized in terms of a boundary-geometric quantity, namely the return energy of Patterson--Sullivan shadows. This formulation also permits nonidentity returns to be retained and controlled, leading to the convex-cocompact improvement \eqref{eq:main-cc}.

      \subsection*{Notation}

      The identity element of $G$ is denoted by $e$, and $o=eK\in\bH^n$. Haar measure on $G$ is normalized so that the induced measure on $G/K$ is hyperbolic volume. We write $A\ll_\vartheta B$ if $|A|\leq C_\vartheta B$ for a constant depending only on the indicated parameters and the fixed base manifold. The notation $B_w(R)$ denotes the hyperbolic ball of radius $R$ centered at $w$.

      \section{Preliminaries}\label{sec:preliminaries}

      \subsection{Patterson--Sullivan densities}

      For $\xi\in\partial\bH^n$, let
      \begin{align*}
        \beta_\xi(z,w)
        =\lim_{y\to\xi}\bigl(d(z,y)-d(w,y)\bigr)
      \end{align*}
      denote the Busemann cocycle. A Patterson--Sullivan density of dimension $\delta_\Gamma$ is a family $(\nu_z)_{z\in\bH^n}$ of finite measures supported on the limit set $\Lambda(\Gamma)$ satisfying
      \begin{align}\label{eq:PS-density}
        \gamma_*\nu_z=\nu_{\gamma z},
        \qquad
        \frac{\rd\nu_z}{\rd\nu_w}(\xi)
        =e^{-\delta_\Gamma\beta_\xi(z,w)}.
      \end{align}
      We fix one such density. For a geometrically finite, Zariski dense group it is unique up to scale. The shadow lemma gives
      \begin{align}\label{eq:shadow-lemma}
        \nu_z\!\left(\operatorname{Sh}_z(B_{\gamma z}(R_0))\right)
        \asymp e^{-\delta_\Gamma d(z,\gamma z)}
      \end{align}
      for $R_0$ sufficiently large, uniformly when the image of $z$ ranges over a fixed compact subset of $X$ \cite{P,S}. If $\Gamma$ is convex cocompact, then $\nu_z$ is Ahlfors $\delta_\Gamma$-regular for a visual metric based at $z$, and hence comparable to the $\delta_\Gamma$-dimensional Hausdorff measure on $\Lambda(\Gamma)$. Raising \eqref{eq:shadow-lemma} to the power $\rho/\delta_\Gamma$ gives \eqref{eq:shadow}.

      \subsection{Exceptional spectrum and positive kernels}

      We take the Laplacian to be nonnegative. Its spectrum below $\rho^2$ is discrete, with finite multiplicities, and every eigenvalue in this range has a unique parameter $u\in(\rho,2\rho]$ such that
      \begin{align*}
        \lambda_u=u(2\rho-u)=u(n-1-u).
      \end{align*}
      Since $u\mapsto\lambda_u$ is decreasing on $[\rho,2\rho]$, $\cN(\lambda_s,\Gamma')$ counts precisely the exceptional parameters $u\geq s$, with multiplicity.

      Let $\psi_u$ be the positive-definite spherical function corresponding to the parameter $u$, normalized by $\psi_u(e)=1$. If $f\in L^1(K\bk G/K)$, its spherical transform is
      \begin{align}\label{eq:spherical-transform}
        \widehat f(\lambda_u)=\int_G f(g)\psi_u(g)\,\rd g.
      \end{align}
      We write $\check f(g)=\overline{f(g^{-1})}$. Then $f*\check f$ is of positive type and has spherical transform $|\widehat f|^2\geq0$. For every compact interval $J\subset(\rho,2\rho]$, one has, uniformly for $u\in J$,
      \begin{align}\label{eq:spherical-decay}
        \psi_u(a_t)\asymp_J e^{(u-2\rho)t},
      \end{align}
      where $d(o,a_to)=t$.

      For each $q$, let $s_{1,q}<\delta_\Gamma$ be the largest spherical complementary-series parameter below $\delta_\Gamma$ occurring in $L^2(\Gamma(q)\bk\bH^n)$, with $s_{1,q}=\rho$ if there is no such parameter. The family has a \emph{uniform spherical spectral gap} if
      \begin{align*}
        \liminf_q(\delta_\Gamma-s_{1,q})>0.
      \end{align*}

      \subsection{Congruence covers and compact cores}

      Fix the defining integral representation $G\subset\GL_{n+1}$ and put $G(\Z)=G\cap\GL_{n+1}(\Z)$. A subgroup of $G$ is arithmetic if it is commensurable with the integral points of a $\Q$-form of $G$. The group $\Gamma(q)$ is the kernel of reduction modulo $q$, and is therefore normal in $\Gamma$.

      For a finite-index subgroup $\Gamma_i<\Gamma$, we write $X_i=\Gamma_i\bk\bH^n$. If $\Omega\subset X$ is the compact set in the collar property and $\Omega_i=p_i^{-1}(\Omega)$, then
      \begin{align*}
        \vol(\Omega_i)=D_i\vol(\Omega).
      \end{align*}
      We refer to $\Omega$ as the compact core. It is the compact set furnished by the collar property and need not be the convex core. If $x=\Gamma_i gK$, replacing $g$ by another representative conjugates the indexing group in \eqref{eq:energy}; consequently, the return energy is well-defined on $\Omega_i$.

      \section{The shadow-return estimate}\label{sec:transfer}

      We prove Theorem~\ref{thm:transfer}. This is a geometric formulation of the positive-kernel and collar argument used in \cite[Section~2.3]{H}. We retain the return terms rather than immediately estimating them by a global orbital count. Let
      \begin{align*}
        B_T=\{g\in G:d(o,go)\leq T\}.
      \end{align*}

      \begin{lemma}[Positive kernel]\label{lem:kernel}
        For every $T\geq1$ there is a bi-$K$-invariant function $F_T$ of positive type, supported in $\{g:d(o,go)\leq2T\}$, such that
        \begin{align}
          \widehat F_T(\lambda_u)&\gg_s e^{4(s-\rho)T}
          &&(u\in[s,\delta_\Gamma]),\label{eq:transform}\\
          0\leq F_T(g)&\ll_s e^{2(s-\rho)T}e^{-\rho d(o,go)}.
          \label{eq:kernel-bound}
        \end{align}
      \end{lemma}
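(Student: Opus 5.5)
Take $F_T = f_T * \check f_T$, where $f_T = \mathbbm{1}_{B_T}$ is the indicator of $B_T$. It is bi-$K$-invariant because $B_T$ is bi-$K$-invariant. It is of positive type, and its spherical transform is $|\widehat f_T|^2$. It is nonnegative as a convolution of nonnegative functions. Its support lies in $B_T B_T^{-1} = B_T B_T \subset \{d(o,go)\le 2T\}$ by the triangle inequality. This is the same kernel as in \cite[Section~2.3]{H}. The two estimates then reduce to a lower bound for $\widehat f_T$ and an upper bound for the convolution.

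For \eqref{eq:transform}, compute in polar coordinates $g = k_1 a_t k_2$, with Haar measure $\asymp \sinh^{2\rho}(t)\,\rd t \asymp e^{2\rho t}\,\rd t$ for $t\ge1$:
\begin{align*}
\widehat f_T(\lambda_u) = \int_{B_T}\psi_u(g)\,\rd g \gg \int_1^T e^{(u-2\rho)t} e^{2\rho t}\,\rd t \gg_s e^{uT} \ge e^{sT},
\end{align*}
uniformly for $u\in[s,\delta_\Gamma]$. Here we used \eqref{eq:spherical-decay} with $J=[s,\delta_\Gamma]$ and the positivity of $\psi_u$. Squaring gives $\widehat F_T \gg e^{2sT}$. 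The statement asks for $e^{4(s-\rho)T}$ relative to the sup normalization, so I would instead rescale: set $F_T := e^{-2\rho T}\cdot e^{\ldots}$. Concretely, define $F_T = c\,e^{-2(s+\rho)T+4(s-\rho)T}\, f_T*\check f_T$ with the normalization chosen so that both bounds hold. The check below shows the exponents are consistent: $e^{2sT}$ times the factor $e^{-4\rho T+2(s-\rho)T}\cdot e^{\ldots}$. Concretely, I would put $F_T = e^{2(s-\rho)T - 2\rho T}\, e^{-2(s-\rho)T}\cdots$, and fix the constant at the end after computing the pointwise bound.

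For \eqref{eq:kernel-bound}, write $f_T*\check f_T(g) = \vol(B_T\cap gB_T)$, up to inversion conventions. This is the main obstacle: we need a bound on the volume of the intersection of two balls of radius $T$ whose centres are at distance $r=d(o,go)$. Hyperbolic geometry gives $\vol(B_o(T)\cap B_{go}(T)) \ll e^{2\rho(T - r/2)} = e^{2\rho T}e^{-\rho r}$. The reason is that the intersection lies in a tube of length $\asymp 2T-r$ around the midpoint region, and more precisely it is contained in the ball of radius $T - r/2 + O(1)$ about the midpoint of $[o,go]$. That containment follows from the thin-triangle (CAT$(-1)$) comparison: a point within $T$ of both endpoints lies within $T - r/2 + \log 2$ of the midpoint. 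Hence $F_T^{(0)} := f_T*\check f_T$ satisfies $\widehat F^{(0)}_T \gg e^{2sT}$ and $0\le F^{(0)}_T(g)\ll e^{2\rho T}e^{-\rho d(o,go)}$.

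Finally set $F_T := e^{2(s-2\rho)T}F^{(0)}_T$. Then
\begin{align*}
\widehat F_T \gg e^{2(s-2\rho)T+2sT}=e^{4(s-\rho)T}, \qquad F_T \ll e^{2(s-2\rho)T+2\rho T}e^{-\rho d}=e^{2(s-\rho)T}e^{-\rho d(o,go)},
\end{align*}
as required. Positivity, bi-$K$-invariance, positive type and the support bound are preserved under this positive scaling. The only delicate point is the midpoint containment for the lens $B_o(T)\cap B_{go}(T)$. I would prove it in the upper half-space model by direct comparison with the tree-like estimate $(x|y)_z \ge \cdots$ via Gromov products, which give exactly the constant $\log 2$.
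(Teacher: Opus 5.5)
Your proof is correct, but it takes a different route from the paper.

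\textbf{The paper's kernel.} The paper takes $f_T=\chi_T\psi_s$, a smooth cutoff of the spherical function $\psi_s$ to $B_T$. This gives the stated normalization directly: $\widehat f_T(\lambda_u)\gg e^{(s+u-2\rho)T}$, and $F_T(e)\asymp e^{2(s-\rho)T}$. The pointwise bound is then quoted as the standard shell-intersection estimate of Sarnak--Xue and Oh. That estimate needs a weighted count of the lens $B_o(T)\cap B_{go}(T)$ against $e^{(s-2\rho)(d(o,x)+d(go,x))}$.

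\textbf{Your kernel.} You use the bare indicator $\mathbbm{1}_{B_T}$ and rescale by $e^{2(s-2\rho)T}$. The pointwise bound then becomes a pure volume bound for the lens. Your midpoint containment handles it cleanly. For instance, the hyperboloid identity $\cosh d(x,o)+\cosh d(x,go)=2\cosh(r/2)\cosh d(x,m)$ gives, for $r\le 2T$,
\begin{align*}
\cosh d(x,m)\le \frac{\cosh T}{\cosh(r/2)}\le e^{T-r/2}.
\end{align*}
Hence $d(x,m)<T-r/2+\log 2$, exactly the constant you predicted.

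\textbf{Comparison.} Nothing is lost by your choice. Both kernels satisfy $\widehat F_T(\lambda_u)/F_T(e)\asymp e^{2(u-\rho)T}$ and have the same $e^{-\rho d(o,go)}$ profile. Theorem~\ref{thm:transfer} only uses that ratio and that profile, so the rescaling is purely cosmetic. Your version is more elementary and self-contained. The paper's version matches the normalization in the statement without rescaling and stays closer to the kernel of the cited work. Note, though, that the paper's kernel is the $\psi_s$-weighted one, not the bare indicator.

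\textbf{Two cleanups.} First, delete the garbled intermediate normalizations in your second paragraph. For example, $c\,e^{-2(s+\rho)T+4(s-\rho)T}$ produces the wrong exponent $4s-6\rho$; only your final factor $e^{2(s-2\rho)T}$ is right. Second, your lower bound $\int_1^T e^{ut}\,\rd t$ vanishes at $T=1$. Either integrate over $[1/2,T]$, where $\sinh^{2\rho}t\gg e^{2\rho t}$, or absorb the range $1\le T\le 2$ into the implied constant as the paper does.
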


      \begin{proof}
        Choose a smooth bi-$K$-invariant function $\chi_T$ which is equal to one on $B_{T-1}$, supported in $B_T$, and takes values in $[0,1]$. Let $f_T=\chi_T\psi_s$ and $F_T=f_T*\check f_T$. Then $\widehat F_T=|\widehat f_T|^2\geq0$. Uniformly for $u\in[s,\delta_\Gamma]$, \eqref{eq:spherical-decay} and the radial volume density give
        \begin{align*}
          \widehat f_T(\lambda_u)
          =\int_G\chi_T(g)\psi_s(g)\psi_u(g)\,\rd g
          \gg_s e^{(s+u-2\rho)T}
          \geq e^{2(s-\rho)T}.
        \end{align*}
        Here the bounded range $1\leq T\leq2$ is absorbed into the implied constant. This proves \eqref{eq:transform}. The support statement is immediate. The standard hyperbolic shell-intersection estimate gives \eqref{eq:kernel-bound}; see \cite{SX} and \cite[(2.3)]{H}.
      \end{proof}

      \begin{proof}[Proof of Theorem~\ref{thm:transfer}]
        Put $T=L/2$ and periodize $F_T$ on $\Gamma_i\bk G$:
        \begin{align*}
          K_{i,T}(g,h)=\sum_{\gamma\in\Gamma_i}F_T(g^{-1}\gamma h).
        \end{align*}
        Since $F_T$ is bi-$K$-invariant, this is a kernel on $X_i$. Let $R_i$ denote right convolution on $L^2(X_i)$, put $A_{i,T}=R_i(F_T)$, and let $M_i$ denote multiplication by $\mathbbm{1}_{\Omega_i}$. Since $F_T=f_T*\check f_T$, the operator $A_{i,T}$ is positive. Moreover, $M_iA_{i,T}M_i$ is trace class: indeed, $M_iR_i(f_T)$ is Hilbert--Schmidt, since $f_T$ is compactly supported and $\Omega_i$ is compact, and
        \begin{align*}
          M_iA_{i,T}M_i
          =(M_iR_i(f_T))(M_iR_i(f_T))^*.
        \end{align*}
        Cyclicity of the trace for products of Hilbert--Schmidt operators therefore gives
        \begin{align*}
          \int_{\Omega_i}K_{i,T}(g,g)\,\rd x
          &=\operatorname{Tr}(M_iA_{i,T}M_i)\\
          &=\operatorname{Tr}(A_{i,T}^{1/2}M_iA_{i,T}^{1/2}).
        \end{align*}
        If $\phi_j$ is an $L^2$-normalized eigenfunction with spectral parameter $u_j$, then
        \begin{align*}
          A_{i,T}\phi_j=\widehat F_T(\lambda_{u_j})\phi_j.
        \end{align*}
        Completing these eigenfunctions to an orthonormal basis of $L^2(X_i)$ and using the positivity of $A_{i,T}^{1/2}M_iA_{i,T}^{1/2}$, we obtain
        \begin{align*}
          \int_{\Omega_i}K_{i,T}(g,g)\,\rd x
          &\geq
          \sum_{u_j\in[s,\delta_\Gamma]}
          \widehat F_T(\lambda_{u_j})
          \int_{\Omega_i}|\phi_j|^2\,\rd x.
        \end{align*}
        Thus \eqref{eq:transform} and the collar property \eqref{CP} give
        \begin{align}\label{eq:spectral-side}
          \int_{\Omega_i}K_{i,T}(g,g)\,\rd x
          \gg_s e^{4(s-\rho)T}\cN(\lambda_s,\Gamma_i).
        \end{align}
        On the geometric side, \eqref{eq:kernel-bound} and the support of $F_T$ give
        \begin{align*}
          \frac1{D_i}\int_{\Omega_i}K_{i,T}(g,g)\,\rd x
          &\ll_s e^{2(s-\rho)T}\frac1{D_i}
          \int_{\Omega_i}
          \sum_{\substack{\gamma\in\Gamma_i\\r_g(\gamma)\leq2T}}
          e^{-\rho r_g(\gamma)}\,\rd x\\
          &=e^{2(s-\rho)T}\cE_i(2T).
        \end{align*}
        Comparing this with \eqref{eq:spectral-side} and using $L=2T$ proves \eqref{eq:transfer}.
      \end{proof}

      We finish this section by recalling the proof of the comparison estimates in Theorem~\ref{thm:SX-Oh}. This also isolates how the present geometric argument differs from the earlier counting arguments.

      \begin{proof}[Proof of Theorem~\ref{thm:SX-Oh}]
        For the first part, take $\Omega=X$, so the collar property is automatic. Since $X$ is compact, normality of $\Gamma(q)$ gives
        \begin{align}\label{eq:energy-count}
          \cE_q(L)
          \ll \sup_{g\in C}
          \sum_{\substack{\gamma\in\Gamma(q)\\d(go,\gamma go)\leq L}}
          e^{-\rho d(go,\gamma go)},
        \end{align}
        where $C\subset G$ is a fixed compact set projecting onto $X$. Sarnak and Xue prove, for the arithmetic lattices in the first part, the uniform counting estimate
        \begin{align}\label{eq:SX-count}
          \#\{\gamma\in\Gamma(q):d(go,\gamma go)\leq R\}
          \ll_\epsilon D_q^{-1}e^{(2\rho+\epsilon)R}+e^{\rho R}.
        \end{align}
        Moving the basepoint in $C$ changes $R$ by a bounded amount, so the estimate is uniform in $g$. Partial summation gives
        \begin{align*}
          \cE_q(L)
          \ll_\epsilon D_q^{-1}e^{(\rho+\epsilon)L}+e^{\epsilon L}.
        \end{align*}
        Theorem~\ref{thm:transfer} therefore yields
        \begin{align*}
          \frac{\cN(\lambda_s,\Gamma(q))}{D_q}
          \ll_{s,\epsilon}
          D_q^{-1}e^{(2\rho-s+\epsilon)L}
          +e^{-(s-\rho-\epsilon)L}.
        \end{align*}
        Taking $e^{\rho L}=D_q$ and renaming $\epsilon$ proves \eqref{eq:SX-density}. The arithmetic input is the lattice-point estimate \eqref{eq:SX-count}.

        For the second part, Oh's uniform orbital counting theorem \cite[Theorem~2.1]{H} gives, after decreasing $\eta$ if necessary so that $0<\eta<\delta_\Gamma-\rho$,
        \begin{align}\label{eq:Oh-count}
          \#\{\gamma\in\Gamma(q):d(go,\gamma go)\leq R\}
          \ll D_q^{-1}e^{\delta_\Gamma R}
          +e^{(\delta_\Gamma-\eta)R}
        \end{align}
        for prime $q$, uniformly on the fixed compact core. The same normality argument as in \eqref{eq:energy-count}, followed by partial summation, gives
        \begin{align*}
          \cE_q(L)
          \ll D_q^{-1}e^{(\delta_\Gamma-\rho)L}
          +e^{(\delta_\Gamma-\eta-\rho)L}.
        \end{align*}
        Using the collar property and Theorem~\ref{thm:transfer}, we obtain
        \begin{align*}
          \cN(\lambda_s,\Gamma(q))
          \ll_s e^{(\delta_\Gamma-s)L}
          +D_qe^{(\delta_\Gamma-\eta-s)L}.
        \end{align*}
        Taking $e^{\eta L}=D_q$ proves \eqref{eq:Oh-density}.
      \end{proof}

      \section{Principal congruence covers}\label{sec:congruence}

      We first show that a principal congruence cover has no nontrivial compact-core return below logarithmic scale. Combined with the pre-trace argument above, this immediately gives \eqref{eq:main}: this is precisely the argument of \cite[Section~2.3]{H}, stopped before the first nonidentity return. We then go beyond the injectivity scale in the convex-cocompact case and use shadow packing to prove \eqref{eq:main-cc}.

      \begin{lemma}[Compact-core congruence injectivity]\label{lem:injectivity}
        There is $B=B(\Gamma,\Omega)$ such that, for every $q\geq3$,
        \begin{align}\label{eq:injectivity}
          \inf_{\substack{x=\Gamma(q)gK\in\Omega_q\\e\neq\gamma\in\Gamma(q)}}
          d(go,\gamma go)\geq\log q-B.
        \end{align}
      \end{lemma}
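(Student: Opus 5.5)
The plan is the standard integrality argument: a nonidentity element of $\Gamma(q)$ has some matrix entry of size at least $q-1$, while displacement $d(go,\gamma go)$ controls matrix entries exponentially. First I would reduce to a fixed compact subset of $G$. Since $\Omega\subset X$ is compact, there is a compact set $C\subset G$ whose image $CK/K$ surjects onto the lift of $\Omega$ to $\Gamma\backslash\bH^n$. Any point $x=\Gamma(q)gK\in\Omega_q$ projects to $\Omega$, so $x$ has a representative $g=\gamma_0 c$ with $\gamma_0\in\Gamma$ and $c\in C$. Because $\Gamma(q)$ is normal in $\Gamma$, the element $\gamma\in\Gamma(q)\setminus\{e\}$ satisfies
\begin{align*}
d(go,\gamma go)=d(co,\gamma' co)\quad\text{with}\quad \gamma'=\gamma_0^{-1}\gamma\gamma_0\in\Gamma(q)\setminus\{e\}.
\end{align*}
It therefore suffices to prove the bound with $g$ ranging over the compact set $C$. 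Moving the basepoint from $co$ to $o$ changes the distance by at most $2\sup_{c\in C}d(o,co)$, which will be absorbed into $B$. This reduces the lemma to showing $d(o,\gamma o)\geq\log q-B'$ for every $e\neq\gamma\in\Gamma(q)$.

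Second comes the norm--distance comparison. In the fixed representation $G\subset\GL_{n+1}$, the Cartan decomposition $\gamma=k_1a_tk_2$ with $t=d(o,\gamma o)$ gives $\|\gamma\|\asymp\|a_t\|$. For the standard representation of $\SO(n,1)$, the diagonal entries of $a_t$ are $e^{\pm t}$ and $1$. Hence $\|\gamma\|\leq C_0e^{d(o,\gamma o)}$ for some constant $C_0$, with any fixed choice of matrix norm, say the max of the entries. If the defining representation is only conjugate to the standard form over $\R$, this changes $C_0$ only.

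Third, integrality finishes the argument. If $\gamma\in\Gamma(q)$ and $\gamma\neq e$, then $\gamma-I$ is a nonzero integer matrix all of whose entries are divisible by $q$. So some entry satisfies $|(\gamma-I)_{jk}|\geq q$, and therefore $\|\gamma\|_{\max}\geq q-1\geq q/3$ for $q\geq3$. Combining the two bounds gives $C_0e^{d(o,\gamma o)}\geq q/3$, that is, $d(o,\gamma o)\geq\log q-\log(3C_0)$. Adding the basepoint error yields the lemma with $B=\log(3C_0)+2\sup_{c\in C}d(o,co)$.

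The only step needing care is the first one: choosing $C$ so that it covers $\Omega_q$ for all $q$ simultaneously, which is where normality of $\Gamma(q)$ in $\Gamma$ is used. The matrix-norm comparison constant also must not depend on $q$; it doesn't, since $C_0$ depends only on the fixed embedding $G\subset\GL_{n+1}$. Neither step is a genuine obstacle.
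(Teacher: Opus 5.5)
Your proposal is correct and follows essentially the same route as the paper. You reduce to a fixed compact set $C\subset G$ using normality of $\Gamma(q)$ in $\Gamma$, get $\|\gamma\|\gg q$ from integrality, compare $\|\gamma\|\ll e^{d(o,\gamma o)}$ via the Cartan decomposition, and absorb the basepoint shift by the triangle inequality, with the added benefit of an explicit constant $B=\log(3C_0)+2\sup_{c\in C}d(o,co)$.
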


      \begin{proof}
        Choose a compact set $C\subset G$ whose image in $X$ contains $\Omega$. Every point of $\Omega_q$ has a representative $\alpha gK$, with $\alpha\in\Gamma$ and $g\in C$. If $\eta\in\Gamma(q)$, then
        \begin{align*}
          d(\alpha go,\eta\alpha go)
          =d(go,\alpha^{-1}\eta\alpha go),
        \end{align*}
        and $\alpha^{-1}\eta\alpha\in\Gamma(q)$ by normality. It therefore suffices to prove a uniform estimate for $g\in C$.

        If $e\neq\gamma\in\Gamma(q)$, then some entry of $\gamma-I$ is a nonzero multiple of $q$. For any fixed matrix norm, $\|\gamma\|\gg q$. The Cartan decomposition in the defining representation gives
        \begin{align*}
          \|\gamma\|\ll_G e^{d(o,\gamma o)},
        \end{align*}
        and therefore $d(o,\gamma o)\geq\log q-O_G(1)$. For $g\in C$,
        \begin{align*}
          d(go,\gamma go)
          \geq d(o,\gamma o)-2d(o,go)
          \geq\log q-B.
        \end{align*}
        The preceding reduction gives the same estimate on every sheet of $\Omega_q$.
      \end{proof}

      We next use the separation in Lemma~\ref{lem:injectivity} to include nonidentity returns. In the convex-cocompact case, Ahlfors regularity of the Patterson--Sullivan measure controls how many such returns can occur in a radial shell.

      \begin{lemma}[Packing of enlarged shadows]\label{lem:shadow-packing}
        Suppose that $\Gamma$ is convex cocompact, and put
        \begin{align*}
          \ell_q=\log q-B,
        \end{align*}
        where $B$ is as in Lemma~\ref{lem:injectivity}. For every $0<\vartheta<1/2$, uniformly for $x=\Gamma(q)gK\in\Omega_q$ and $R\geq\ell_q$,
        \begin{align}\label{eq:shell-packing}
          \#\{\gamma\in\Gamma(q):R\leq r_g(\gamma)<R+1\}
          \ll_\vartheta
          e^{\delta_\Gamma(R-\vartheta\ell_q)}.
        \end{align}
        Consequently, for $L\geq\ell_q$,
        \begin{align}\label{eq:energy-packing}
          \cE_q(L)
          \ll_\vartheta
          1+e^{-\vartheta\delta_\Gamma\ell_q}
          e^{(\delta_\Gamma-\rho)L}.
        \end{align}
      \end{lemma}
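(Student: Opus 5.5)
The plan is to turn the $\ell_q$-separation of $\Gamma(q)$-orbit points from Lemma~\ref{lem:injectivity} into disjointness of suitably \emph{enlarged} shadows seen from $go$, and then bound the number of disjoint shadows by the finite total mass of $\nu_{go}$.

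\textbf{Reduction to a compact set of basepoints.} As in the proof of Lemma~\ref{lem:injectivity}, write $x=\Gamma(q)\alpha gK$ with $\alpha\in\Gamma$ and $g\in C$, for a fixed compact $C\subset G$. Normality of $\Gamma(q)$ turns the shell count at $\alpha g$ into the same count at $g$. We may also take $q$ larger than any fixed constant, since the finitely many small $q$ are absorbed into the implied constant.

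\textbf{Separation in Gromov products.} Put $z=go$. Let $\gamma\neq\gamma'$ be in the shell $\{R\le r_g(\gamma)<R+1\}$. Since $\gamma^{-1}\gamma'\in\Gamma(q)\setminus\{e\}$, Lemma~\ref{lem:injectivity} gives
\begin{align*}
  d(\gamma z,\gamma' z)=d(z,\gamma^{-1}\gamma' z)\ge\ell_q .
\end{align*}
Because $d(z,\gamma z)$ and $d(z,\gamma'z)$ both lie in $[R,R+1)$, the Gromov product satisfies $(\gamma z|\gamma' z)_z\le R-\ell_q/2+1$.

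\textbf{Enlarged shadows.} Let $w_\gamma$ be the point on the geodesic $[z,\gamma z]$ at distance $t:=R-\vartheta\ell_q$ from $z$. Consider the enlarged shadow $S_\gamma:=\operatorname{Sh}_z(B_{w_\gamma}(R_1))$, where $R_1$ is a fixed radius chosen below.

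Suppose $S_\gamma\cap S_{\gamma'}\neq\varnothing$. Then, by $\delta$-hyperbolicity of $\bH^n$, $(w_\gamma|w_{\gamma'})_z\ge t-O(R_1)$. Since $w_\gamma$ lies on $[z,\gamma z]$, this gives $(\gamma z|\gamma'z)_z\ge t-O(R_1)$. Combined with the upper bound above, we get
\begin{align*}
  (\tfrac12-\vartheta)\,\ell_q\le O(R_1).
\end{align*}
This fails once $q$ is large because $\vartheta<1/2$. So for large $q$ the shadows $S_\gamma$ of distinct shell elements are pairwise disjoint.

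\textbf{Lower bound on each shadow.} This is the step I expect to be the main obstacle, because $w_\gamma$ is not an orbit point and \eqref{eq:shadow-lemma} does not apply directly. Convex cocompactness handles it:

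\begin{enumerate}
  \item The lift of $C$, together with all orbit points $\Gamma z$, lies within a bounded distance $D_0$ of the convex hull of $\Lambda(\Gamma)$.
  \item Hence the geodesic segment $[z,\gamma z]$ lies within $O(1)$ of the hull.
  \item The quotient of the hull is compact, so $w_\gamma$ is within some bounded distance $D$ of an orbit point $\beta z$ with $\beta\in\Gamma$, and $d(z,\beta z)=t+O(1)$.
  \item Take $R_1\ge R_0+D$. Then $S_\gamma\supseteq\operatorname{Sh}_z(B_{\beta z}(R_0))$.
  \item The shadow lemma \eqref{eq:shadow-lemma}, uniform for $g\in C$, gives $\nu_z(S_\gamma)\gg e^{-\delta_\Gamma t}$.
\end{enumerate}

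Disjointness and $\nu_z(\partial\bH^n)\ll1$ uniformly on $C$ then give
\begin{align*}
  \#\{\gamma\in\Gamma(q):R\le r_g(\gamma)<R+1\}\ll e^{\delta_\Gamma(R-\vartheta\ell_q)},
\end{align*}
which is \eqref{eq:shell-packing}.

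\textbf{Energy bound.} To prove \eqref{eq:energy-packing}, split the inner sum in \eqref{eq:energy}.

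\begin{itemize}
  \item The identity term contributes $\vol(\Omega_q)/D_q=\vol(\Omega)\ll1$.
  \item By Lemma~\ref{lem:injectivity}, every nonidentity term has $r_g(\gamma)\ge\ell_q$. Decompose these into unit shells $R=\ell_q+k$ with $0\le k\le L-\ell_q$ and apply \eqref{eq:shell-packing}:
  \begin{align*}
    \sum_{0\le k\le L-\ell_q}e^{-\rho(\ell_q+k)}e^{\delta_\Gamma(\ell_q+k+1-\vartheta\ell_q)}\ll e^{-\vartheta\delta_\Gamma\ell_q}e^{(\delta_\Gamma-\rho)L}.
  \end{align*}
  The sum is a geometric series with ratio $e^{\delta_\Gamma-\rho}>1$, since $\delta_\Gamma>\rho$, so it is dominated by its last term.
\end{itemize}

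This bound is pointwise in $x$ and uniform. Integrating over $\Omega_q$ and dividing by $D_q$ costs only the factor $\vol(\Omega)$, which proves \eqref{eq:energy-packing}.
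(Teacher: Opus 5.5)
Your proposal is correct and follows essentially the paper's route. The $\ell_q$-separation from Lemma~\ref{lem:injectivity}, combined with a Gromov-product estimate, makes the enlarged shadows of distinct shell elements disjoint for large $q$; each shadow has $\nu_z$-mass $\gg e^{-\delta_\Gamma(R-\vartheta\ell_q)}$; and finiteness of $\nu_z$ plus summation over unit shells gives \eqref{eq:shell-packing} and \eqref{eq:energy-packing}.

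The only difference is cosmetic. You use fixed-radius balls around the points $w_\gamma$ at distance $R-\vartheta\ell_q$ along $[z,\gamma z]$, and get the mass lower bound from the orbit-point shadow lemma via a nearby $\Gamma$-orbit point (using convex cocompactness). The paper instead uses the shadows of $B_{\gamma z}(\vartheta\ell_q)$ and appeals to Ahlfors regularity, which gives the same estimate.
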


      \begin{proof}
        Put $z=go$. The points $\gamma z$, $\gamma\in\Gamma(q)$, are pairwise $\ell_q$-separated: if $\gamma_1\neq\gamma_2$, then Lemma~\ref{lem:injectivity} applied to $\gamma_1^{-1}\gamma_2$ gives
        \begin{align*}
          d(\gamma_1z,\gamma_2z)
          =d(z,\gamma_1^{-1}\gamma_2z)\geq\ell_q.
        \end{align*}
        Consider those orbit points for which $R\leq d(z,\gamma z)<R+1$. Since $\Gamma$ is convex cocompact, $\nu_z$ is Ahlfors $\delta_\Gamma$-regular for a visual metric based at $z$. Standard shadow geometry therefore gives
        \begin{align}\label{eq:enlarged-shadow}
          \nu_z\!\left(
          \operatorname{Sh}_z(B_{\gamma z}(\vartheta\ell_q))
          \right)
          \asymp_\vartheta
          e^{-\delta_\Gamma(R-\vartheta\ell_q)}.
        \end{align}
        The constants are uniform because the image of $z$ lies in the fixed compact set $\Omega$.

        These enlarged shadows have uniformly bounded overlap. Indeed, if a boundary point belongs to two of them, the Gromov product of the corresponding centers, based at $z$, is at least $R-\vartheta\ell_q-O(1)$. Hence the two centers are at distance at most
        \begin{align*}
          2\vartheta\ell_q+O(1)<\ell_q
        \end{align*}
        for all sufficiently large $q$, contradicting their $\ell_q$-separation. The finitely many remaining levels are absorbed into the implied constant. Summing \eqref{eq:enlarged-shadow} and using the finiteness of $\nu_z$ proves \eqref{eq:shell-packing}.

        Lemma~\ref{lem:injectivity} shows that the identity is the only return with $r_g(\gamma)<\ell_q$. Decomposing the remaining returns into unit shells and applying \eqref{eq:shell-packing}, we obtain
        \begin{align*}
          \sum_{\substack{\gamma\in\Gamma(q)\\r_g(\gamma)\leq L}}
          e^{-\rho r_g(\gamma)}
          &\ll_\vartheta
          1+e^{-\vartheta\delta_\Gamma\ell_q}
          \sum_{\ell_q\leq R\leq L}
          e^{(\delta_\Gamma-\rho)R}\\
          &\ll_\vartheta
          1+e^{-\vartheta\delta_\Gamma\ell_q}
          e^{(\delta_\Gamma-\rho)L}.
        \end{align*}
        Integrating over $\Omega_q$ and dividing by $D_q$ proves \eqref{eq:energy-packing}.
      \end{proof}

      \begin{proof}[Proof of Theorem~\ref{thm:main}]
        Fix $s\in(\rho,\delta_\Gamma]$ and take the compact set $\Omega=\Omega_s$ furnished by the collar lemma. For all sufficiently large $q$, so that the quantity below is at least $2$, put
        \begin{align*}
          L=\log q-B-1.
        \end{align*}
        By Lemma~\ref{lem:injectivity}, the identity is the only term in \eqref{eq:energy}, and hence
        \begin{align*}
          \cE_q(L)=\vol(\Omega).
        \end{align*}
        Equivalently, in the unfolded expression from \cite[Section~2.3]{H}, every nonidentity term vanishes by the support of $F_T$, and the identity contributes $D_q\vol(\Omega)F_T(e)$. Since $F_T(e)\ll_s e^{2(s-\rho)T}$, comparison with the spectral lower bound \eqref{eq:spectral-side} gives the same estimate directly.
        Theorem~\ref{thm:transfer} gives
        \begin{align*}
          \frac{\cN(\lambda_s,\Gamma(q))}{D_q}
          \ll_s e^{-(s-\rho)(\log q-B-1)}
          \ll_s q^{-(s-\rho)}.
        \end{align*}
        This proves \eqref{eq:main}; the remaining finitely many levels are absorbed into the implied constant.

        Suppose now that $\Gamma$ is convex cocompact. If $\delta_\Gamma=2\rho$, then \eqref{eq:main-cc} is the same as \eqref{eq:main}, after enlarging the implied constant. We may therefore assume that $\delta_\Gamma<2\rho$. Put
        \begin{align*}
          d=\delta_\Gamma-\rho,
          \qquad x=s-\rho.
        \end{align*}
        Choose
        \begin{align*}
          \frac{d}{\delta_\Gamma}<\vartheta<\frac12
        \end{align*}
        and set
        \begin{align*}
          L=\frac{\vartheta\delta_\Gamma}{d}\ell_q.
        \end{align*}
        Thus $L\geq\ell_q$ for all sufficiently large $q$. By Theorem~\ref{thm:transfer} and \eqref{eq:energy-packing},
        \begin{align*}
          \frac{\cN(\lambda_s,\Gamma(q))}{D_q}
          &\ll_{s,\vartheta}
          e^{-xL}
          \left(1+e^{-\vartheta\delta_\Gamma\ell_q}e^{dL}\right)\\
          &=2e^{-xL}\\
          &=2\exp\!\left(
          -\frac{\vartheta\delta_\Gamma(s-\rho)}{\delta_\Gamma-\rho}\ell_q
          \right).
        \end{align*}
        Since $\ell_q=\log q-O(1)$, taking $\vartheta<1/2$ sufficiently close to $1/2$ proves \eqref{eq:main-cc}.
      \end{proof}

      \section*{Acknowledgements}
      The author used OpenAI's ChatGPT for research brainstorming, literature discovery, checking intermediate arguments and calculations, and editorial revision.  ChatGPT was not treated as an author or as an independent source of mathematical authority.  The author assumes full responsibility for all statements, proofs, citations, and errors.

      This work was supported by a grant from the Simons Foundation [SFI-MPS-TSM-00013410,CL]. 

      We thank Dennis Sullivan for posing the question that motivated this work and Jonathan Fraser for a helpful conversation. We thank Hee Oh helpful comments on an earlier draft.

  \small 
  \bibliographystyle{alpha}
  \bibliography{biblio}

    \hrulefill

    \vspace{4mm}
    \noindent Mathematics Department, University of Houston, Houston, TX, 77004, USA
    \\
    \emph{E-mail: \textbf{clutsko@uh.edu}}

\end{document}